\newtheorem{theorem}{Theorem}
\newtheorem{lemma}{Lemma}
\newcommand{\Tr}{{\rm Tr\,}}
\theoremstyle{definition}
\newtheorem{remark}{Remark}
\begin{document}
\title{Central limit theorem under uncertain linear transformations}

\author{Dmitry B. Rokhlin}

\address{Institute of Mathematics, Mechanics and Computer Sciences,
              Southern Federal University,
Mil'chakova str., 8a, 344090, Rostov-on-Don, Russia}
\email{rokhlin@math.rsu.ru}
\thanks{The research is supported by Southern Federal University, project 213.01-07-2014/07.}

\begin{abstract}
We prove a variant of the central limit theorem (CLT) for a sequence of i.i.d. random variables $\xi_j$, perturbed by a stochastic sequence of linear transformations $A_j$,
representing the model uncertainty. The limit, corresponding to a "worst" sequence $A_j$, is expressed in terms of the viscosity solution of the $G$-heat equation. In the context of the CLT under sublinear expectations this nonlinear parabolic equation appeared previously in the papers of S.\,Peng. Our proof is based on the technique of half-relaxed limits from the theory of approximation schemes for fully nonlinear partial differential equations.
\end{abstract}
\subjclass[2010]{60F05, 35D40}
\keywords{Central limit theorem, model uncertainty, $G$-heat equation, viscosity solution, half-relaxed limits}

\maketitle

\section{Problem formulation} \label{sec:1} 
\setcounter{equation}{0}

Consider a sequence of i.i.d. $d$-dimensional random variables $(\xi_j)_{j=1}^\infty$, $\xi_j=(\xi_j^r)_{r=1}^d$. Denote by $\xi$ a random variable distributed as $\xi_j$, and assume that
\begin{equation} \label{eq:1.1}
\mathsf E\xi=0, \quad \left(\mathsf E(\xi^r\xi^l)\right)_{r,l=1}^d=I,
\end{equation}
where $I$ is the identity matrix. By the classical central limit theorem (CLT), for any bounded continuous function $f:\mathbb R^d\mapsto\mathbb R$  and a fixed  $d\times d$ matrix $A$,  we have
$$ \lim_{n\to\infty}\mathsf E f\left(\sum_{j=1}^n \frac{A\xi_j}{\sqrt n}\right)=\mathsf E f(A\eta),$$
where $\eta$ has the standard $d$-dimensional normal law. Note, that for given $f$ the limit depends only on the covariance matrix $AA^T$ of $A\xi$.

In this paper we consider the case where $A$ is not known exactly, and can change dynamically within a prescribed set. This is a simple example of a probability model under uncertainty. The extension of the CLT, obtained below, looks similar to Peng's CLT under sublinear expectations: \cite{Peng07,Peng08}. However, our problem formulation, as well as the proof, do not involve the nonlinear expectations theory in any way. On the other hand, similarly to Peng's approach, the key role is played by the viscosity solutions theory.

Consider a filtered probability space $(\Omega,\mathscr F,\mathbb P, (\mathscr F_j)_{j=0}^\infty)$ and an adapted sequence $(\xi_j)_{j=1}^\infty$ of $d$-dimensional random variables such that $\xi_j$ is independent from $\mathscr F_{j-1}$ and satisfy (\ref{eq:1.1}).
Denote by $\mathbb M_d$ (resp., $\mathbb S_d$) the set of $d\times d$ matrices (resp., symmetric matrices).
Let $(A_j)_{j=0}^\infty$ be an adapted sequence with values in a compact set $\Lambda\in\mathbb M_d$. The process $(A_j,\mathscr F_j)_{j=0}^\infty$, where  is $\mathscr F_j$, $j\ge 1$ may be wider than $\sigma(\xi_1,\dots,\xi_j)$, is chosen by the ``nature", and represents the ``Knightian uncertainty".

Our goal is to describe the limit
\begin{equation} \label{eq:1.2}
\mathscr L:=\lim_{n\to\infty}\sup_{A_0^{n-1}\in\mathfrak A_0^{n-1}}\mathsf E f\left(\sum_{j=0}^{n-1} \frac{A_j\xi_{j+1}}{\sqrt n}\right),
\end{equation}
where $f$ is a bounded continuous function on $\mathbb R^d$, $A_s^{n-1}=(A_j)_{j=s}^{n-1}$ and $\mathfrak A_s^{n-1}=\{A_s^{n-1}| A_j:\Omega\mapsto\Lambda \text{ and } A_j \text{ is } \mathscr F_j\text{-measurable}\}$. Besides the theoretical interest, such quantities are useful, e.g., for measuring risk and option pricing under uncertain volatility: see \cite{AveLevPar95}.

Our result (Theorem \ref{th:1}) and its discussion are presented in the next section. The proof is deferred to Section \ref{sec:3}. In Section \ref{sec:4} we discuss the relationship of our problem to the nonlinear expectations theory.

\section{Central limit theorem under uncertain linear transformations}
\label{sec:2}
\setcounter{equation}{0}
First we formulate our result and then give some comments.

\begin{theorem} \label{th:1}
Let $v:[0,1]\times\mathbb R^d\mapsto\mathbb R$ be the unique continuous viscosity solution of the nonlinear parabolic equation
\begin{equation} \label{eq:2.1}
 -v_t(t,x)-\frac{1}{2}\sup_{A\in\Lambda}\Tr\left(A A^T v_{xx}(t,x)\right)=0, \quad (t,x)\in [0,1)\times \mathbb R^d,
\end{equation}
satisfying the terminal condition
\begin{equation} \label{eq:2.2}
v(1,x)=f(x),\quad x\in\mathbb R^d.
\end{equation}
Then $\mathscr L=v(0,0)$.
\end{theorem}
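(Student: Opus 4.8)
The plan is to recognize the quantity inside the limit in \eqref{eq:1.2} as the value function of a discrete-time control problem and to pass to the limit through the Barles--Souganidis framework of monotone approximation schemes. First I would introduce, for each $n$ and each $k\in\{0,\dots,n\}$, the value function
\begin{equation*}
 v_n(k,x)=\sup_{A_k^{n-1}\in\mathfrak A_k^{n-1}}\mathsf E f\Bigl(x+\sum_{j=k}^{n-1}\frac{A_j\xi_{j+1}}{\sqrt n}\Bigr),
\end{equation*}
so that the expression inside the limit in \eqref{eq:1.2} equals $v_n(0,0)$. Using the independence of $\xi_{j+1}$ from $\mathscr F_j$, the tower property, and a measurable selection argument, I would establish the dynamic programming principle
\begin{equation*}
 v_n(k,x)=\sup_{A\in\Lambda}\mathsf E\, v_n\Bigl(k+1,x+\frac{A\xi}{\sqrt n}\Bigr),\qquad v_n(n,x)=f(x),
\end{equation*}
where on the right the supremum is taken over a single matrix $A\in\Lambda$. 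Since $f$ is bounded, the estimate $\|v_n\|_\infty\le\|f\|_\infty$ is immediate and supplies the stability of the scheme.

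Writing $h=1/n$ and $t=k/n$, the recursion defines a monotone approximation scheme $S_h$ for the operator $F(p,X)=-p-\tfrac12\sup_{A\in\Lambda}\Tr(AA^TX)$ of \eqref{eq:2.1}, via
\begin{equation*}
 S_h[w](t,x)=\frac1h\Bigl(w(t,x)-\sup_{A\in\Lambda}\mathsf E\,w\bigl(t+h,\,x+\sqrt h\,A\xi\bigr)\Bigr).
\end{equation*}
Monotonicity holds because $S_h[w]$ depends on the values of $w$ at the next time level only through the order-reversing map $w\mapsto-\sup_{A\in\Lambda}\mathsf E\,w(t+h,\cdot)$. For consistency I would Taylor-expand, for a test function $\phi\in C^\infty$ with bounded derivatives,
\begin{equation*}
 \mathsf E\,\phi\bigl(t+h,x+\sqrt h\,A\xi\bigr)=\phi(t,x)+h\phi_t(t,x)+\frac h2\Tr\bigl(AA^T\phi_{xx}(t,x)\bigr)+o(h),
\end{equation*}
where the vanishing of the first-order term and the precise form of the second-order term come exactly from the normalizations $\mathsf E\xi=0$ and $\mathsf E(\xi\xi^T)=I$ in \eqref{eq:1.1}. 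Taking $\sup_{A\in\Lambda}$ and dividing by $h$ then gives $S_h[\phi]\to F(\phi_t,\phi_{xx})$, the left-hand side of \eqref{eq:2.1}.

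The delicate point, and the one I expect to be the main obstacle, is controlling the remainder uniformly enough to obtain consistency, since the scheme is nonlocal and $\xi$ is assumed only to have finite second moments, so the customary appeal to a third moment is unavailable. Writing the integral form of the second-order remainder as $\tfrac h2\Tr\bigl(A\xi\xi^TA^T[\phi_{xx}(\zeta)-\phi_{xx}(t,x)]\bigr)$ for an intermediate point $\zeta$, I would bound its expectation by $\tfrac h2\mathsf E\bigl[|A\xi|^2\,\omega(\sqrt h\,|A\xi|)\bigr]$, with $\omega$ the (bounded) modulus of continuity of $\phi_{xx}$. Because $\Lambda$ is compact, $|A\xi|^2\le C^2|\xi|^2$ is integrable uniformly in $A$, and since $\omega(\sqrt h\,|A\xi|)\to0$ pointwise as $h\to0$, dominated convergence shows this term is $o(h)$ uniformly in $A\in\Lambda$. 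This Lindeberg-type estimate is the substitute for the missing higher moment.

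With monotonicity, stability, and consistency established, I would introduce the half-relaxed limits
\begin{equation*}
 \overline v(t,x)=\limsup_{\substack{n\to\infty\\ k/n\to t,\;y\to x}}v_n(k,y),\qquad \underline v(t,x)=\liminf_{\substack{n\to\infty\\ k/n\to t,\;y\to x}}v_n(k,y),
\end{equation*}
which are finite by stability and satisfy $\underline v\le\overline v$. The standard half-relaxed limits argument then shows that $\overline v$ is a viscosity subsolution and $\underline v$ a viscosity supersolution of \eqref{eq:2.1}, while the exact identity $v_n(n,\cdot)=f$ together with the continuity of $f$ delivers the terminal condition \eqref{eq:2.2} in the relaxed sense (a point requiring some care near $t=1$). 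Invoking the comparison principle that underlies the uniqueness asserted in the theorem yields $\overline v\le\underline v$, hence $\overline v=\underline v=v$, the unique solution. In particular $v_n(0,0)\to v(0,0)$, which is exactly the claim $\mathscr L=v(0,0)$.
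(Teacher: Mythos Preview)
Your proposal is correct and follows essentially the same route as the paper: discrete value functions, dynamic programming via measurable selection, a Lindeberg-type consistency estimate using only second moments, Barles--Perthame half-relaxed limits to produce semicontinuous sub- and supersolutions, and then comparison. The one place the paper is more explicit is the final step: it stresses (Remark~\ref{rem:1}) that uniqueness of \emph{continuous} solutions is not enough---one needs a comparison principle between usc subsolutions and lsc supersolutions, which is supplied via an accessibility result and a suitable comparison theorem---so when you write ``the comparison principle that underlies the uniqueness,'' be sure you are invoking this stronger statement.
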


The equation (\ref{eq:2.1}) can be written in the form
$$ -v_t-G(v_{xx})=0,\quad\text{where } G(S)=\frac{1}{2}\sup_{A\in\Lambda}\Tr\left(A A^T S\right),\ S\in \mathbb S_d.$$
By $v_{xx}=(v_{x^r x^l})_{r,l=1}^d$ we denote the Hessian matrix.

Let us recall the definitions of viscosity semisolutions: see, e.g., \cite{CraIshLio92}. Put $Q^\circ=[0,1)\times\mathbb R^d$, $Q=[0,1]\times\mathbb R^d$ and denote by $C^2_b(\mathbb R^{d+1})$ the set of test functions, whose derivatives up to the second order, are continuous and bounded. A bounded upper semicontinous (usc) (resp., lower semicontinuous (lsc)) function $u:Q\mapsto\mathbb R$,  is called a \emph{viscosity subsolution} (resp., \emph{supersolution}) of the problem (\ref{eq:2.1}), (\ref{eq:2.2}) if $u(1,x)\le f(x)$ (resp., $u(1,x)\ge f(x)$) on $\mathbb R^d$ and for any $\varphi\in C^2_b(\mathbb R^{d+1})$, $(\overline t,\overline x)\in Q^\circ$ such that $(\overline t,\overline x)$ is the strict global maximum (resp., minimum) point of $u-\varphi$ on $Q^\circ$, the inequality
$$ -\varphi_t(\overline t,\overline x)-G\left(\varphi_{xx}(\overline t,\overline x)\right)\le 0\quad (\text{resp.}, \ge 0)$$
holds true. As is well known, in these definitions one can equivalently assume that the extremum is local or not strict.

A bounded \emph{continuous} function $u:Q\mapsto\mathbb R$ is called a \emph{viscosity solution} of (\ref{eq:2.1}), (\ref{eq:2.2}), if it is a viscosity sub- and supersolution.

\begin{remark} \label{rem:1}
The uniqueness of a bounded continuous viscosity solution of  (\ref{eq:2.1}), (\ref{eq:2.2}) is well known: see, e.g., \cite{Nis15} (Theorem 3.5). However, to prove Theorem \ref{th:1} we need a more subtle result. Note, that we require the equation (\ref{eq:2.1}) to be satisfied in the viscosity sense at the lower boundary of $Q$. So, by the accessibility theorem of \cite{CheGigGot91},
for an usc viscosity subsolution $u$ and a lsc viscosity supersolution $w$ of (\ref{eq:2.1}) we have
$$ u(0,x)=\limsup_{\substack{(t,y)\in (0,1)\times\mathbb R^d,\\ t\to 0, y\to x}} u(t,y);\quad
   w(0,x)=\liminf_{\substack{(t,y)\in (0,1)\times\mathbb R^d,\\ t\to 0, y\to x}} w(t,y).
$$
Using this fact we may apply the comparison result of \cite{DieFriObe14} (Theorem 1) and conclude that $u\le w$ on $Q$.
\end{remark}

\begin{remark} \label{rem:2}
Consider the system of stochastic differential equations
\begin{equation} \label{eq:2.3}
dY_s=A_s dW_s,\quad s\in [t,1],\quad Y_t=x,
\end{equation}
where $W$ is a standard $d$-dimensional Brownian motion. Denote by $\mathcal A_t$ the set of stochastic process $(A_s)_{s\in [t,1]}$, which are progressively measurable with respect to the minimal augmented filtration, generated by $W$ (see Chap. 1 of \cite{Bass11}), and take values in $\Lambda$. From the theory of stochastic optimal control (see \cite{Tou13,Nis15}) we know that the value function
\begin{equation} \label{eq:2.4}
V(t,x)=\sup_{A\in\mathcal A_t}\mathsf E f(Y_1)
\end{equation}
is a continuous viscosity solution of the Hamilton-Jacobi-Bellman equation (\ref{eq:2.1}). Taking $(t,x)=(0,0)$, from Theorem \ref{th:1}, we obtain the stochastic control representation of the limit (\ref{eq:1.2}):
$$ \mathscr L=V(0,0)=\sup_{A\in\mathcal A_0}\mathsf E f\left(\int_0^1 A_s\, dW_s\right).$$
\end{remark}

\begin{remark} \label{rem:3}
The Euler scheme
$$ Y_{j/n+1/n}=Y_{j/n}+ A_{j/n} (W_{j/n+1/n}-W_{j/n}),\quad j=0,\dots,n-1$$
for (\ref{eq:2.3}) gives the approximation
$$ V(0,0)\approx \sup_{A_{j/n}\in\Lambda}\mathsf E f\left(\sum_{j=0}^{n-1} \frac{A_{j/n}\eta_{j+1}}{\sqrt n}\right),\quad \eta_{j+1}=\sqrt n (W_{j/n+1/n}-W_{j/n})$$
for the value function (\ref{eq:2.4}), similar to the expression in (\ref{eq:1.2}). However, $\eta_j$, in contrast to $\xi_j$, are normal. From Theorem \ref{th:1} it follows that  to get a correct approximation of $(\ref{eq:2.4})$, one can take in the Euler scheme instead of $\eta_j$ any i.i.d. random vectors $\xi_j$, satisfying (\ref{eq:1.1}).
\end{remark}

\begin{remark} \label{rem:4}
Equation (\ref{eq:2.1}) is called $G$-heat equation: \cite{Peng07b,Peng08b}. It was used by S.\,Peng for the description of the $G$-normal distribution. Moreover, the representation of the same form, as given in Theorem \ref{th:1}, appeared in the CLT under sublinear expectations: see \cite{Peng07,Peng08}.
\end{remark}

\begin{remark} \label{rem:5}
Similar to the classical case, for fixed $f$, the limit $\mathscr L$ depends only on the set $\{A A^T: A\in\Lambda\}$ of possible covariance matrices  of $A\xi$. We stress that $\mathscr L$ does not depend on the choice of filtration $\mathscr F_j\supset\sigma(\xi_1,\dots,\xi_j)$.
\end{remark}

To prove Theorem \ref{th:1} let us introduce the state variables $X_j$ by
\begin{equation} \label{eq:2.5}
 X_{j+1}=X_j+\frac{A_j\xi_{j+1}}{\sqrt n},\quad X_s=x,\quad j=s,\dots,n-1.
\end{equation}
Denote the solution of (\ref{eq:2.5}) by $X^{s,x,A}$ and consider the value functions
\begin{equation} \label{eq:2.6}
v_n(1,x)=f(x),
\end{equation}
\begin{equation} \label{eq:2.7}
v_n(s/n,x)=\sup_{A_s^{n-1}\in\mathfrak A_s^{n-1}}\mathsf E f\left(X_n^{s,x,A}\right),\quad s=0,\dots, n-1.
\end{equation}
Clearly,
\begin{equation} \label{eq:2.8}
\mathscr L=\lim_{n\to\infty}\sup_{A_0^{n-1}\in\mathfrak A_0^{n-1}}\mathsf E f\left(X_n^{0,0,A}\right)=\lim_{n\to\infty} v_n(0,0).
\end{equation}

Our goal is to prove that $v_n(0,0)\to v(0,0)$, where $v$ defined in Theorem \ref{th:1}. We apply the half-relaxed limits technique of \cite{BarPer88}, which became standard in the theory of approximation schemes for fully nonlinear second order elliptic and parabolic equations after the seminal paper \cite{BarSou91}. Namely, we construct the half-relaxed limits $\underline v\le\overline v$ of $v_n$, and prove that they are viscosity semisolutions of (\ref{eq:2.1}), (\ref{eq:2.2}). Then we use the comparison result, mentioned in Remark \ref{rem:1}, to prove the opposite inequality: $\underline v\ge\overline v$. It follows that $v=\underline v=\overline v$ and $v_n\to v$.

Note, that the strategy of \cite{Peng07,Peng08}
is different and based on the following fact:
\begin{equation} \label{eq:2.9}
\lim_{n\to\infty}\sup_{A_0^{n-1}\in\mathfrak A_0^{n-1}}\mathsf Ev(1,X_n^{0,0,A})=v(0,0).
\end{equation}
The direct proof of (\ref{eq:2.9}) requires the interior H\"{o}lder regularity of $v$, which is guaranteed in the strong parabolic case, and Lipshitz continuity of $f$. The general case requires perturbations and approximations: see \cite{Peng08}.

\section{The proof of Theorem \ref{th:1}}
\label{sec:3}
\setcounter{equation}{0}
The discrete time stochastic control problem (\ref{eq:2.5}), (\ref{eq:2.6}), (\ref{eq:2.7}) looks quite standard. However, to model Knightian uncertainty,
we consider the class $\mathcal A_n$ of ``open-loop" strategies $A_j$, adapted to an arbitrary filtration $\mathscr F_j\supset\sigma(\xi_1,\dots,\xi_j)$ (cf. \cite{Sir14}). So, we prefer to give a direct proof of the dynamic programming principle, instead of trying to find an appropriate reference.
\begin{lemma} \label{lem:1}
Put $t_j^n=j/n, j=0,\dots,n.$
The value functions $v_n$ are continuous in the state variable $x$ and uniformly bounded:
\begin{equation} \label{eq:3.1}
 |v_n(t_j^n,x)| \le\sup_{x\in\mathbb R^d} |f(x)|.
\end{equation}
Moreover, they satisfy the recurrence relations
\begin{equation} \label{eq:3.2}
v_n(1,x)=f(x),
\end{equation}
\begin{equation} \label{eq:3.3}
 v_n(t_j^n,x)=\sup_{A\in\Lambda} \mathsf E v_n(t_{j+1}^n,x+A\xi/\sqrt n),\quad j=0,\dots,n-1.
\end{equation}
\end{lemma}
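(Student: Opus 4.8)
The plan is to argue by backward induction on $j$, from $j=n$ down to $j=0$, and to reduce the whole statement to comparing $v_n$ with an auxiliary function defined by a purely deterministic backward recursion. The bound \eqref{eq:3.1} needs no induction: since $-\sup_x|f(x)|\le f\le\sup_x|f(x)|$, these inequalities persist under expectation and under the supremum over $\mathfrak A_s^{n-1}$ in \eqref{eq:2.7}. The terminal identity \eqref{eq:3.2} is the definition \eqref{eq:2.6}. So the entire content is the recurrence \eqref{eq:3.3} together with continuity in $x$. To keep the two directions of \eqref{eq:3.3} clean and manifestly non-circular, I would introduce the functions $w_j$ defined by $w_n=f$ and $w_j(x)=\sup_{A\in\Lambda}\mathsf E\,w_{j+1}(x+A\xi/\sqrt n)$, and prove the identity $v_n(t_j^n,\cdot)=w_j$ for every $j$.

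The regularity of the $w_j$ is elementary and propagates down the recursion: if $w_{j+1}$ is bounded and continuous, then by dominated convergence $(x,A)\mapsto\mathsf E\,w_{j+1}(x+A\xi/\sqrt n)$ is jointly continuous, and since $\Lambda$ is compact, $w_j$ is continuous with $|w_j|\le\sup_x|f(x)|$. Once $v_n(t_j^n,\cdot)=w_j$ is established, continuity of $v_n(t_j^n,\cdot)$ is inherited from $w_j$, and the recurrence \eqref{eq:3.3} holds because $w_j=\sup_{A\in\Lambda}\mathsf E\,w_{j+1}(\cdot+A\xi/\sqrt n)=\sup_{A\in\Lambda}\mathsf E\,v_n(t_{j+1}^n,\cdot+A\xi/\sqrt n)$ by construction.

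The engine behind both inequalities $v_n(t_j^n,\cdot)=w_j$ is the freezing property of conditional expectation: as $X_k$ and $A_k$ are $\mathscr F_k$-measurable while $\xi_{k+1}$ is distributed as $\xi$ and independent of $\mathscr F_k$, one has $\mathsf E[\,w_{k+1}(X_{k+1})\mid\mathscr F_k]=\left.\mathsf E\,w_{k+1}(X_k+A\xi/\sqrt n)\right|_{A=A_k}$ for the dynamics \eqref{eq:2.5}. For $v_n(t_j^n,x)\le w_j(x)$ I would fix an arbitrary admissible $(A_j,\dots,A_{n-1})$ and show, by backward induction on $k$, that $\mathsf E[f(X_n)\mid\mathscr F_k]\le w_k(X_k)$ almost surely; the step combines the tower property, the freezing identity, and the pointwise bound $\left.\mathsf E\,w_{k+1}(X_k+A\xi/\sqrt n)\right|_{A=A_k}\le\sup_{A\in\Lambda}\mathsf E\,w_{k+1}(X_k+A\xi/\sqrt n)=w_k(X_k)$. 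Setting $k=j$ and taking expectations gives $\mathsf E f(X_n)\le w_j(x)$ for every control. For the reverse inequality I would exhibit an optimal Markov feedback: by a standard measurable selection argument (the objective $(x,A)\mapsto\mathsf E\,w_{j+1}(x+A\xi/\sqrt n)$ is continuous and $\Lambda$ compact) choose measurable maximizers $a_k^\ast:\mathbb R^d\to\Lambda$ and set $A_k=a_k^\ast(X_k)$; these are admissible since $X_k$ is $\mathscr F_k$-measurable, and the same backward induction now produces the equality $\mathsf E[f(X_n)\mid\mathscr F_k]=w_k(X_k)$, whence $\mathsf E f(X_n)=w_j(x)$.

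The step I expect to demand the most care is this lower bound, specifically the admissibility and joint measurability of the feedback controls $A_k=a_k^\ast(X_k)$ and the existence of the measurable selectors $a_k^\ast$. The genuine conceptual point, however, is the asymmetry built into the open-loop formulation: the upper bound must survive against \emph{every} control adapted to an arbitrary filtration $\mathscr F_k\supseteq\sigma(\xi_1,\dots,\xi_k)$, whereas the matching lower bound is already attained within the far smaller class of Markov feedback controls. The freezing identity is precisely what reconciles the two, since it shows that enlarging the filtration beyond $\sigma(\xi_1,\dots,\xi_k)$ cannot help—conditionally on $\mathscr F_k$ the increment $\xi_{k+1}$ still enters only through its law—so no adapted open-loop strategy can beat the deterministic recursion $w_j$. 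Everything else, namely boundedness, continuity, and the tower-property bookkeeping, is routine.
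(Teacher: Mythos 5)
Your proposal is correct and follows essentially the same route as the paper: define the backward recursion (your $w_j$, the paper's $v_n(t_j^n,\cdot)$ defined via \eqref{eq:3.2}--\eqref{eq:3.3}), propagate continuity and boundedness by dominated convergence and compactness of $\Lambda$, obtain the upper bound for arbitrary adapted controls via the independence/freezing identity, and attain it with a measurable-selection Markov feedback. The only differences are presentational (you separate the auxiliary function notationally and phrase the upper bound as a conditional-expectation induction rather than the paper's telescoping of unconditional expectations).
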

\begin{proof} Consider the sequence $v_n$, defined by (\ref{eq:3.2}), (\ref{eq:3.3}). Clearly, $v_n(1,\cdot)$ is continuous and bounded.
Assume that $v_n(t^n_{j+1},\cdot)$ has the same properties. Then
$ F^n_j(x,A)=\mathsf E v_n(t^n_{j+1},x+A\xi/\sqrt n)$
is continuous in $(x,A)$ by the dominated convergence theorem, and the function
$$ v_n(t_j^n,x)=\sup_{A\in\Lambda} F^n_j(x,A)$$
is bounded and continuous in $x$ by the compactness of $\Lambda$.

Furthermore, by the Dubins-Savage type measurable selection result \cite{Schal74}, \cite{Sri98} (Theorem 5.3.1), there exists a Borel measurable function $\Phi_j:\mathbb R^d\mapsto\Lambda$ such that
\begin{equation} \label{eq:3.4}
v_n(t_j^n,x)=F^n_j(x,\Phi_j(x))=\mathsf E v_n(t_{j+1}^n,x+\Phi_j(x)\xi/\sqrt n),\quad x\in\mathbb R^d.
\end{equation}
Define the process $X$ by the recurrence relations
\begin{equation} \label{eq:3.5}
X_{j+1}=X_j+\Phi_j(X_j)\xi_{j+1}/\sqrt n,\quad X_s=x,
\end{equation}
and put $A^*_j=\Phi_j(X_j)$. The process (\ref{eq:3.5}) can be written as $X^{s,x,A^*}$. From (\ref{eq:3.4}) we get
$$ v_n(t_j^n,X^{s,x,A^*}_j)=\int_{\mathbb R^d} v_n(t_{j+1}^n,X^{s,x,A^*}_j+ \Phi_j(X_j^{s,x,A^*}) z/\sqrt n)\,d\mathsf P_\xi(dz),$$
where $\mathsf P_\xi$ is the distribution of $\xi$. Taking the expectation and using the independence of $X_j^{s,x,A^*}$ and $\xi_{j+1}$, we obtain
$$ \mathsf E v_n(t_j^n,X^{s,x,A^*}_j)=\mathsf E v_n(t_{j+1}^n,X^{s,x,A^*}_j+ \Phi_j(X_j^{s,x,A^*}) \xi_{j+1}/\sqrt n)=
   \mathsf E v_n(t_{j+1}^n,X^{s,x,A^*}_{j+1}).$$
Thus,
\begin{equation} \label{eq:3.6}
 v_n(t_s^n,x)=\mathsf E v_n(t_s^n,X^{s,x,A^*}_s)=\mathsf E v_n(t_n^n,X^{s,x,A^*}_n)=\mathsf E f(X^{s,x,A^*}_n).
\end{equation}

On the other hand, $v_n(t_j^n,x)\ge\mathsf E v_n(t_{j+1}^n,x+A\xi/\sqrt n)$, $A\in\Lambda$,
and for any sequence $(A_j)_{j=s}^{n-1}\in\mathfrak A_s^{n-1}$ we have
$$ v_n(t_j^n,X_j^{s,x,A})\ge \int_{\mathbb R^d} v_n(t_{j+1}^n,X_j^{s,x,A}+A_j z/\sqrt n)\,d\mathsf P_\xi(dz).$$
Taking the expectation, and using the independence of $(X_j^{s,x,A},A_j)$ and $\xi_{j+1}$, we obtain
$$ \mathsf E v_n(t_j^n,X^{s,x,A}_j)\ge\mathsf E v_n(t_{j+1}^n,X^{s,x,A}_j+ A_j \xi_{j+1}/\sqrt n) =\mathsf E v_n(t_{j+1}^n,X^{s,x,A}_{j+1}).$$

Hence,
\begin{equation} \label{eq:3.7}
v_n(t_s^n,x)=\mathsf E v_n(t_s^n,X^{s,x,A}_s)\ge \mathsf E v_n(t_n^n,X^{s,x,A}_n)=\mathsf E f(X^{s,x,A}_n)
\end{equation}
for any $(A_j)_{j=s}^{n-1}\in\mathfrak A_s^{n-1}$. Combining (\ref{eq:3.6}) and (\ref{eq:3.7}), we conclude that the function, defined by the recurrence relations (\ref{eq:3.2}), (\ref{eq:3.3}), is the same as the function (\ref{eq:2.6}), (\ref{eq:2.7}).
The inequality (\ref{eq:3.1}) follows from (\ref{eq:2.7}).
\end{proof}

As follows from the proof, an ``optimal strategy'' $A_j^*$ of the nature uses only the information on the current state $X_j$, although the available information $\mathscr F_j$ may be much richer (see \cite{Sir14} for a similar conclusion).

Consider a closed set $U\in\mathbb R^m$ and a sequence $U_n\subset U$ of its closed subsets such that for any $x\in U$ there exist exists a sequence $x_k\in U_{n_k}$, $n_k\in\mathbb N$ converging to $x$. For a uniformly bounded sequence of continuous functions $u_n:U_n\mapsto\mathbb R$, $|u_n|\le M$ define Barles-Perthame type half-relaxed limits $\underline u, \overline u:U\mapsto\mathbb R$ as follows
$$ \underline u(x)=\inf\{\lim u_{n_k}(x_k): x_k\in U_{n_k}, x_k\to x \text{ and } u_{n_k}(x_k) \text{ converges}\};$$
$$ \overline u(x)=\sup\{\lim u_{n_k}(x_k): x_k\in U_{n_k}, x_k\to x \text{ and } u_{n_k}(x_k) \text{ converges}\}.$$

It follows from the definitions that there exist sequences $n_k\in\mathbb N$, $x_k\in U_{n_k}$ such that $x_k\to x$, $u_{n_k}(x_k)\to\underline u(x)$ (resp., $u_{n_k}(x_k)\to\overline u(x)$).

The proofs of the next two lemmas follow the argumentation of \cite{BarCap97} (Chap. V, Lemmas 1.5, 1.6).

\begin{lemma} \label{lem:2}
$\underline u$ is lsc, $\overline u$ is usc.
\end{lemma}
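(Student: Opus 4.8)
The plan is to prove that $\overline u$ is usc; the lower semicontinuity of $\underline u$ then follows by the symmetric argument, replacing suprema by infima, $\limsup$ by $\liminf$, and reversing every inequality. I would work directly from the sequential characterization of upper semicontinuity: it suffices to show that for every $x_0\in U$ and every sequence $y_m\in U$ with $y_m\to x_0$ one has $\limsup_{m\to\infty}\overline u(y_m)\le\overline u(x_0)$. Note that the hypothesis on $U_n$ guarantees that the set of admissible limits in the definition is nonempty, and the uniform bound $|u_n|\le M$ ensures it is bounded, so $\overline u(x_0)$ is a well-defined real number.

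First I would fix $x_0\in U$ and a sequence $y_m\to x_0$, and pass to a subsequence (relabelled $y_m$) along which $\overline u(y_m)$ converges to $L:=\limsup_{m}\overline u(y_m)$. By the remark following the definition of the half-relaxed limits, for each $m$ there exist indices $n_k^{(m)}\to\infty$ and points $x_k^{(m)}\in U_{n_k^{(m)}}$ with $x_k^{(m)}\to y_m$ and $u_{n_k^{(m)}}(x_k^{(m)})\to\overline u(y_m)$ as $k\to\infty$. The heart of the argument is a diagonal extraction: for each $m$ I would choose an index $k(m)$ so large that, with $N_m:=n_{k(m)}^{(m)}$ and $z_m:=x_{k(m)}^{(m)}$, one has simultaneously $N_m\ge m$, $|z_m-y_m|<1/m$, and $|u_{N_m}(z_m)-\overline u(y_m)|<1/m$. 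This is possible for fixed $m$ because both convergences hold as $k\to\infty$ while $n_k^{(m)}\to\infty$.

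With this choice I obtain a single sequence $z_m\in U_{N_m}$ such that $N_m\to\infty$, while $|z_m-x_0|\le|z_m-y_m|+|y_m-x_0|\to 0$ and $u_{N_m}(z_m)\to L$. Hence $(z_m,N_m)$ is an admissible sequence in the definition of $\overline u(x_0)$ along which the values $u_{N_m}(z_m)$ converge to $L$; since $\overline u(x_0)$ is the supremum of all such limits, $L\le\overline u(x_0)$. As $y_m\to x_0$ was arbitrary, this gives upper semicontinuity.

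I expect the only delicate point to be the diagonalization, where one must arrange the three conditions, namely $N_m\to\infty$, the spatial approximation $z_m\to x_0$, and the value approximation $u_{N_m}(z_m)\to L$, simultaneously out of the per-$m$ approximating families. The requirement $N_m\to\infty$ is what makes the resulting sequence admissible in the definition of $\overline u(x_0)$, so ensuring it is kept during the extraction is the crux. Once the single sequence is produced, the conclusion follows immediately from the definition of $\overline u$ as a supremum, and the case of $\underline u$ is identical up to the sign changes noted above.
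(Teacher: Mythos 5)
Your proof is correct and rests on the same idea as the paper's: for each point $y_m$ of a sequence approaching $x_0$, extract from the definition a near-optimal pair $(N_m,z_m)$ with $|z_m-y_m|<1/m$ and diagonalize to produce a single admissible sequence for $x_0$, then invoke the extremal property of $\overline u(x_0)$ (the paper merely phrases this contrapositively, assuming semicontinuity fails and deriving a contradiction with the definition of $\underline u(x)$ as an infimum). No substantive difference.
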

\begin{proof}
If $\underline u$ is not lsc at $x\in U$, then there exist $\delta>0$, $J\in \mathbb N$ and a sequence $y_j\in U$, $y_j\to x$ such that $\underline u(y_j)\le \underline u(x)-\delta$ for $j\ge J$. By the definition of $\underline u$ for each $j$ there exist $n_j\in\mathbb N$, $x_j\in U_{n_j}$, $|x_j-y_j|<1/j$ such that
$$ u_{n_j}(x_j)\le \underline u(y_j)+\delta/2.$$
Thus,
$ u_{n_j}(x_j)\le \underline u(x)-\delta/2$, $j\ge J$ in contradiction with the definition of $\underline u$.

The case of $\overline u$ is considered in the same way.
\end{proof}

\begin{lemma} \label{lem:3}
Let $\varphi\in C_b^2(\mathbb R^m)$, $\overline x\in U$. If $\overline x$ is the strict global minimum (resp., maximum) point of $\underline u-\varphi$ (resp., $\overline u-\varphi$) on $U$, then there exist sequences $n_k\in\mathbb N$, $y_k\in U_{n_k}$ such that $y_k\to x$, $u_{n_k}(y_k)\to\underline u(\overline x)$ (resp., $u_{n_k}(y_k)\to\overline u(\overline x)$), and a test function $\psi$ such that $\psi_x(\overline x)=\varphi_x(\overline x)$, $\psi_{xx}(\overline x)=\varphi_{xx}(\overline x)$ and $y_k$ is a global minimum (resp., maximum) point of $u_{n_k}-\psi$ on $U_{n_k}$.
\end{lemma}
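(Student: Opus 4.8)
The plan is to prove the minimum case for $\underline u$; the maximum case for $\overline u$ then follows by applying the result to $-u_n$ and $-\varphi$. I would produce $y_k$ as minimizers of $u_{n_k}-\varphi$ over a fixed small closed ball about $\overline x$, check that $y_k\to\overline x$ with $u_{n_k}(y_k)\to\underline u(\overline x)$, and finally modify $\varphi$ away from $\overline x$ into a test function $\psi$ that upgrades these localized minima to global minima over all of $U_{n_k}$. Subtracting a constant from $\varphi$ changes neither its derivatives nor the property that $\overline x$ is a strict global minimum of $\underline u-\varphi$, so I may assume $(\underline u-\varphi)(\overline x)=0$, whence $(\underline u-\varphi)(x)>0$ for $x\neq\overline x$. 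Fix $r>0$ and put $\overline B=\{x:|x-\overline x|\le r\}$. By the observation recorded after the definition of the half-relaxed limits there are $n_k$ and $\xi_k\in U_{n_k}$ with $\xi_k\to\overline x$ and $u_{n_k}(\xi_k)\to\underline u(\overline x)$; hence $U_{n_k}\cap\overline B$ is nonempty for large $k$, is compact (closed and bounded), and the continuous function $u_{n_k}-\varphi$ attains a minimum there at some $y_k$.

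Next I would identify the limit of $y_k$. Since $\{y_k\}\subset\overline B$ is bounded, any subsequential limit $y$ lies in $U\cap\overline B$. From $u_{n_k}(y_k)-\varphi(y_k)\le u_{n_k}(\xi_k)-\varphi(\xi_k)\to 0$ I get $\limsup_k[u_{n_k}(y_k)-\varphi(y_k)]\le 0$, while the definition of $\underline u$ gives $\liminf_k u_{n_k}(y_k)\ge\underline u(y)$; together these yield $\underline u(y)-\varphi(y)\le 0$, and strictness of the minimum forces $y=\overline x$. As every subsequential limit equals $\overline x$, the full sequence satisfies $y_k\to\overline x$, and then $\liminf_k[u_{n_k}(y_k)-\varphi(y_k)]\ge\underline u(\overline x)-\varphi(\overline x)=0$ combines with the $\limsup$ bound to give $u_{n_k}(y_k)\to\varphi(\overline x)=\underline u(\overline x)$, as required.

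Finally I would globalize. Choose a cutoff $\chi\in C^\infty(\mathbb R^m)$ with $0\le\chi\le1$, $\chi\equiv1$ on $\{|x-\overline x|\le r/2\}$ and $\chi\equiv0$ off $\overline B$, and set $\psi=\chi\varphi-(1-\chi)K$ with a constant $K\ge 2M+\sup|\varphi|+1$. Then $\psi\in C_b^2(\mathbb R^m)$, $\psi=\varphi$ near $\overline x$ (so $\psi_x(\overline x)=\varphi_x(\overline x)$ and $\psi_{xx}(\overline x)=\varphi_{xx}(\overline x)$), $\psi\le\varphi$ everywhere, and $\psi\equiv-K$ off $\overline B$. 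For $k$ large enough that $y_k\in\{|x-\overline x|<r/2\}$, so that $\psi(y_k)=\varphi(y_k)$, the point $y_k$ minimizes $u_{n_k}-\psi$ over $U_{n_k}$: on $\overline B$ one has $u_{n_k}-\psi\ge u_{n_k}-\varphi\ge(u_{n_k}-\varphi)(y_k)=(u_{n_k}-\psi)(y_k)$ by the choice of $y_k$ and $\psi\le\varphi$; off $\overline B$ the uniform bound $|u_{n_k}|\le M$ with $\psi\equiv-K$ gives $u_{n_k}-\psi\ge K-M\ge M+\sup|\varphi|\ge(u_{n_k}-\varphi)(y_k)=(u_{n_k}-\psi)(y_k)$.

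I expect this last globalization to be the main obstacle, and it is the reason a plain compactness argument does not suffice: the set $U$ (here $Q=[0,1]\times\mathbb R^d$) is unbounded, so a genuine global minimizer of $u_{n_k}-\varphi$ over $U_{n_k}$ may fail to exist or may escape to infinity. The uniform bound $|u_n|\le M$ is exactly what allows the penalty $-(1-\chi)K$ to confine the minimum to the ball while leaving the first- and second-order jet of $\varphi$ at $\overline x$ untouched.
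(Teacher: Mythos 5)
Your proposal is correct and follows essentially the same route as the paper: minimize $u_{n_k}-\varphi$ over $U_{n_k}$ intersected with a fixed ball, use the strictness of the minimum of $\underline u-\varphi$ to force $y_k\to\overline x$ and $u_{n_k}(y_k)\to\underline u(\overline x)$, then globalize with a cutoff penalty that exploits the uniform bound $|u_n|\le M$. The only cosmetic difference is the form of the modified test function (you take $\psi=\chi\varphi-(1-\chi)K$, the paper takes $\psi=\varphi-M'\chi$), which changes nothing of substance.
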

\begin{proof}
We consider the case of $\underline u$.
Let a sequence $x_k\to \overline x$, $x_k\in U_{n_k}$ be such that $u_{n_k}(x_{n_k})\to\underline u(\overline x)$, and let $y_k$ be a minimum point of $u_{n_k}-\varphi$ on the set $U_{n_k}\cap B_1$, $B_1=\{y:|y-\overline x|\le 1\}$. Then
\begin{equation} \label{eq:3.8}
(u_{n_k}-\varphi)(y_k)\le (u_{n_k}-\varphi)(x_k)
\end{equation}
for sufficiently large $k$. Passing, if necessary, to a subsequence, we may assume that $y_k\to \overline y\in U$, $u_{n_k}(y_k)\to z$. Thus, by (\ref{eq:3.8}) and the definition of $\underline u$, we get
$$ \underline u(\overline y)-\varphi(\overline y)\le z-\varphi(\overline y)\le \underline u(\overline x)-\varphi(\overline x).$$

It follows that $\overline y=\overline x$, $z=\underline u(\overline x)$, since $\overline x$ is the strict minimum point of $\underline u-\varphi$ on $U$. Furthermore, $y_k\to\overline x$ is a local minimum point of $u_{n_k}-\varphi$ on $U_{n_k}$, since $y_k$ lies inside the ball $B_1$ for sufficiently large $k$.

Let $\chi\in C_b^2$ be a function such that $\chi(x)=0$, $|x-\overline x|\le 1/2$, $\chi(x)=1$, $|x-\overline x|\ge 1$. Then there exist $M'>0$ such that $y_k$ is a global minimum point of $u_{n_k}(x)-\varphi(x)+M'\chi(x)$ on $U_{n_k}$ for $k$ large enough. The sequence $y_{n_k}$ and the test function $\psi=\varphi-M'\chi$ have the desired properties.
\end{proof}

In the theory of approximation schemes the following relation is known as a consistency condition: see \cite{BarSou91}.

\begin{lemma} \label{lem:4}
Let $\varphi\in C_b^2(\mathbb R\times\mathbb R^d)$ and $(t_n,x_n)\to (\overline t,\overline x)$. Then
$$\lim_{n\to\infty} n\sup_{A\in\Lambda}\mathsf E\left(\varphi(t_n+1/n,x_n+A\xi/\sqrt n)-\varphi(t_n,x_n)\right)=
\varphi_t(\overline t,\overline x)+G\left(\varphi_{xx}(\overline t,\overline x)\right).$$
\end{lemma}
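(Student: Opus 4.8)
The plan is to Taylor-expand $\varphi(t_n+1/n, x_n + A\xi/\sqrt n)$ in $\varphi$ around the point $(t_n,x_n)$, take expectations using the moment conditions \eqref{eq:1.1}, and then pass to the limit, controlling the supremum over $A$ uniformly. First I would fix $A\in\Lambda$ and write, by Taylor's theorem with integral remainder,
$$
\varphi(t_n+1/n, x_n+A\xi/\sqrt n)-\varphi(t_n,x_n)
= \frac{1}{n}\varphi_t + \frac{1}{\sqrt n}\langle\varphi_x, A\xi\rangle
+ \frac{1}{2n}\langle \varphi_{xx}\,A\xi, A\xi\rangle + R_n,
$$
where the first three derivative terms are evaluated at $(t_n,x_n)$ and $R_n$ collects the second-order time and mixed time–space contributions together with the higher-order remainder. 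Taking $\mathsf E$ and using $\mathsf E\xi=0$, the linear term $\frac{1}{\sqrt n}\langle\varphi_x, A\mathsf E\xi\rangle$ vanishes. Using $(\mathsf E(\xi^r\xi^l))=I$, the quadratic term gives
$$
\mathsf E\,\frac{1}{2n}\langle \varphi_{xx}\,A\xi,A\xi\rangle
= \frac{1}{2n}\,\Tr\!\left(A A^T \varphi_{xx}(t_n,x_n)\right).
$$
Multiplying by $n$, the first two terms converge to $\varphi_t(\overline t,\overline x)+\frac12\Tr(A A^T \varphi_{xx}(\overline t,\overline x))$ as $n\to\infty$, by continuity of $\varphi_t$ and $\varphi_{xx}$ and $(t_n,x_n)\to(\overline t,\overline x)$.

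The main obstacle, and the step that needs care, is the uniformity of these limits in $A\in\Lambda$ and the control of the remainder $n\mathsf E R_n$. Here I would exploit that $\varphi\in C^2_b$, so $\varphi_t,\varphi_x,\varphi_{xx}$ are bounded and uniformly continuous, and that $\Lambda$ is compact, so $\|A\|$ is bounded by some constant $L$. The genuinely second-order-in-time and mixed terms are $O(1/n^2)$ and $O(1/n^{3/2})$ after multiplication by $n$ and vanish (using $\mathsf E|\xi|\le (\mathsf E|\xi|^2)^{1/2}=\sqrt d$). For the Hessian term, the remainder beyond the leading quadratic arises only from replacing $\varphi_{xx}(t_n,x_n)$ by the actual value along the segment; since $\varphi_{xx}$ is uniformly continuous and $|A\xi/\sqrt n|\le L|\xi|/\sqrt n\to 0$ in probability, a uniform-continuity modulus argument combined with $\mathsf E|\xi|^2=d<\infty$ (and dominated convergence, the integrand being bounded by a multiple of $|\xi|^2$) shows $n\mathsf E R_n\to 0$ uniformly in $A\in\Lambda$.

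Finally, since all the estimates are uniform in $A$, I can interchange the limit $n\to\infty$ with $\sup_{A\in\Lambda}$: writing $g_n(A)=n\mathsf E(\varphi(t_n+1/n,x_n+A\xi/\sqrt n)-\varphi(t_n,x_n))$, I have $g_n(A)\to \varphi_t(\overline t,\overline x)+\frac12\Tr(AA^T\varphi_{xx}(\overline t,\overline x))$ uniformly on $\Lambda$, and uniform convergence commutes with taking suprema. Hence
$$
\lim_{n\to\infty}\sup_{A\in\Lambda} g_n(A)
= \sup_{A\in\Lambda}\left(\varphi_t(\overline t,\overline x)+\tfrac12\Tr\!\left(AA^T\varphi_{xx}(\overline t,\overline x)\right)\right)
= \varphi_t(\overline t,\overline x)+G\left(\varphi_{xx}(\overline t,\overline x)\right),
$$
where the last equality uses the definition of $G$ and the fact that $\varphi_t(\overline t,\overline x)$ does not depend on $A$. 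I expect the uniform control of the Taylor remainder to be the only delicate point; everything else is a direct consequence of \eqref{eq:1.1}, the compactness of $\Lambda$, and the boundedness of the derivatives of $\varphi$.
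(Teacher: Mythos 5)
Your proposal is correct and follows essentially the same route as the paper: a Taylor expansion of $\varphi$, cancellation of the first-order spatial term via $\mathsf E\xi=0$, identification of the Hessian term via the covariance condition, dominated convergence for the remainder, and the elementary interchange of the limit with $\sup_{A\in\Lambda}$; the paper merely organizes the expansion sequentially (first in $t$, then in $x$, with Lagrange intermediate points), which makes the mixed and second-order-in-time terms you place in $R_n$ disappear from the outset. One small caveat: the paper's $C^2_b$ only guarantees continuity and boundedness of the second derivatives, not uniform continuity as you assert, but this does not matter since the dominated convergence argument you also invoke (with plain continuity at $(\overline t,\overline x)$ and the domination $\lvert\text{integrand}\rvert\le C\lvert\xi\rvert^2$) already suffices.
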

\begin{proof}
By Taylor's formula we get
\begin{align} \label{eq:3.9}
& \varphi(t_n+1/n,x_n+A\xi/\sqrt n)-\varphi(t_n,x_n)=\varphi(t_n+1/n,x_n+A\xi/\sqrt n)\nonumber\\
- &\varphi(t_n,x_n+A\xi/\sqrt n)+ \varphi(t_n,x_n+A\xi/\sqrt n)-\varphi(t_n,x_n)\nonumber\\
= & \frac{1}{n}\varphi_t(\widehat t_n,x_n+A\xi/\sqrt n)+\frac{1}{\sqrt n}\varphi_x(t_n,x_n) A\xi
+  \frac{1}{2n}\varphi_{xx}(t_n,\widehat x_n)A\xi\cdot A\xi,
\end{align}
where
$\widehat t_n=t_n+\alpha_n/n$, $\widehat x_n=x_n+\beta_n A\xi/\sqrt n$, $\alpha_n,\beta_n\in [0,1].$
Note, that the sum of the first and third terms in the last line of (\ref{eq:3.9}) is $\sigma(\xi)$-measurable, and the expectation of the second term is $0$ since $\mathsf E\xi=0$.

Using (\ref{eq:3.9}), we get the inequality
\begin{gather*}
 \left|n\sup_{A\in\Lambda}\mathsf E\left(\varphi(t_n+1/n,x_n+A\xi/\sqrt n)-\varphi(t_n,x_n)\right)-\left(\varphi_t(\overline t,\overline x)+G\left(\varphi_{xx}(\overline t,\overline x)\right)\right)\right|\\
 \le \sup_{A\in\Lambda}\left|\mathsf E\varphi_t(\widehat t_n,x_n+A\xi/\sqrt n)+\frac{1}{2}\mathsf E\left(\varphi_{xx}(t_n,\widehat x_n)A\xi\cdot A\xi\right)\right.\\
  \left. -\varphi_t(\overline t,\overline x)-\frac{1}{2}\Tr\left(A A^T \varphi_{xx}(\overline t,\overline x)\right)
 \right|,
\end{gather*}
which yields the result by the dominated convergence theorem. \end{proof}

To apply Lemmas \ref{lem:2}, \ref{lem:3} in our case, put $m=d+1$, $U=Q=[0,1]\times\mathbb R^d$, $U_n=Q_n=\bigcup_{j=0}^n \{j/n\}\times\mathbb R^d$, and denote by $\underline v$, $\overline v$ the half-relaxed limits of $v_n$.

Let $(\overline t,\overline x)\in Q$ be the strict global minimum point of $\underline v-\varphi$ on $Q$ for a test function $\varphi\in C_b^2(\mathbb R^m)$. Take sequences $y_k=(t_{j(k)},x_k)\in Q_{n_k}\to (\overline t,\overline x)$, $v_{n_k}$ and a function $\psi$, given by Lemma \ref{lem:3}. If $\overline t<1$, we may assume that $t_{j(k)}<1$, that is, $j(k)<n$. By the recurrence relation (\ref{eq:3.3}) we have
$$ v_{n_k}(t_j(k),x_k)=\sup_{A\in\Lambda} \mathsf E v_{n_k}(t_{j(k)}+1/n_k,x_k+A\xi/\sqrt n_k).$$
The inequality
$ (v_{n_k}-\psi)(t,x)\ge (v_{n_k}-\psi)(t_{j(k)},x_k)$, $(t,x)\in Q_{n_k} $
implies that
$$ 0\ge \sup_{A\in\Lambda} \mathsf E \psi(t_{j(k)}+1/n_k,x_k+A\xi/\sqrt n_k)-\psi(t_j(k),x_k),$$
and Lemma \ref{lem:4} gives the inequality
\begin{equation} \label{eq:3.10}
 0 \le -\varphi_t(\overline t,\overline x)-G\left(\varphi_{xx}(\overline t,\overline x)\right),
\end{equation}
since the derivatives of $\varphi$ and $\psi$, up to the second order, coincide at $(\overline t,\overline x)$.

Now assume that $\overline t=1$. Clearly, $\underline v(1,x)\le f(x)$. If $\underline v(1,\overline x)<f(\overline x)$ then $t_{j(k)}<1$ for sufficiently large $k$ (since $v_{n_k}(1,x_k)=f(x_k)$ converges to $f(\overline x)$). So, we again obtain the inequality (\ref{eq:3.10}) as above. However, this is impossible. Indeed, we can change $\varphi$ to $\widehat\varphi=\varphi - c(1-t)$, $c>0$ in this inequality, since $(1,\overline x)$ is still the global minimum point of $\underline v-\widehat\varphi$:
$$ 0\le -c-\varphi_t-G\left(\varphi_{xx}(\overline t,\overline x)\right),\quad \text{for any } c>0.$$
This contradiction shows that $\underline v(1,x)=f(x)$, $x\in\mathbb R^d$.

Thus, we have proved that $\underline v$ is a viscosity supersolution of (\ref{eq:2.1}), (\ref{eq:2.2}). In the same way one can prove that $\overline v$ is a viscosity subsolution of (\ref{eq:2.1}), (\ref{eq:2.2}). By the comparison result, mentioned in Remark \ref{rem:1}, we have $\underline v\ge\overline v$. The opposite inequality follows from the definition of $\underline v$, $\overline v$. Therefore, $v=\underline v=\overline v$ is the unique continuous viscosity solution of (\ref{eq:2.1}), (\ref{eq:2.2}).



Finally, form the definition of $\underline v, \overline v$ we see that
$$ v(0,0)=\underline v(0,0)\le\liminf_{n\to\infty} v_n(0,0)\le\limsup_{n\to\infty} v_n(0,0)\le\overline v(0,0)=v(0,0).$$
In view of (\ref{eq:2.8}), this finishes the proof of Theorem 1.

\section{On the relationship with the sublinear expectations framework}
\label{sec:4}
Recall (see, e.g., \cite{Peng08c}) that a sublinear expectation space is a triple $(\Omega,\mathcal H,\widehat{\mathbb E})$, where $\Omega$ is a set, $\mathcal H$ is a linear space of real valued functions defined on $\Omega$, and $\widehat{\mathbb E}$ is a sublinear functional on $\mathcal H$. It is assumed that $\mathcal H$ contains constants and $|X|\in\mathcal H$ for $X\in\mathcal H$. Moreover, $\mathcal H$ should be invariant with respect to some functional transformations. The most standard assumption is the following:
$$ \varphi(X_1,\dots,X_n)\in\mathcal H\qquad \text{for }X_1,\dots,X_n\in\mathcal H,\quad \varphi\in C_{l,Lip}(\mathbb R^n),$$
where $C_{l,Lip}(\mathbb R^n)$ is the linear space of functions $\varphi$, satisfying the inequalities
$$ |\varphi(x)-\varphi(y)|\le C(1+|x|^m+|y|^m)|x-y|,\quad x,y\in\mathbb R^n,$$
with $C$, $m$, depending on $\varphi$.

Sublinear expectation $\widehat{\mathbb E}:\mathcal H\mapsto\mathbb R$ satisfies the following conditions:
\begin{itemize}
\item[(i)] $\widehat{\mathbb E} X\le \widehat{\mathbb E} Y$, $X\le Y$,
\item[(ii)] $\widehat{\mathbb E} c=c$ for a constant $c\in\mathbb R$,
\item[(iii)] $\widehat{\mathbb E}(X+Y)\le \widehat{\mathbb E} X+\widehat{\mathbb E} Y$,
\item[(iv)] $\widehat{\mathbb E} (\lambda X)=\lambda \widehat{\mathbb E} X$,\quad $\lambda\ge 0$.
\end{itemize}

To put our problem in this context, assume that the random variables $\xi_i$ have finite moments of all orders. Consider the space of sequences $\Omega=\{(x_i)_{i=1}^\infty:x_i\in\mathbb R^d\}$, and introduce the space of random variables $\mathcal H$ as follows: $\mathcal H=\cup_{n=1}^\infty \mathcal H_n$, where
$\mathcal H_n$ is the space of continuous functions $X=\psi(x_1,\dots,x_n)$ of polynomial growth. Define the sublinear expectation by the formula
$$ \widehat{\mathbb E}X=\sup_{A_0^{n-1}\in\mathfrak A_0^{n-1}}\mathsf E\psi(A_0\xi_1,\dots,A_{n-1}\xi_n).$$
It is easy to see that the triple $(\Omega,\mathcal H,\widehat{\mathbb E})$, defined in this way, is a sublinear expectation space.

Denote by $X_i$ the projection mappings: $X_i(x)=x_i$. For $\varphi\in C_{l,Lip}(\mathbb R^{d\times n})$ put
$$\overline\varphi(x_1,\dots,x_{n-1})=\widehat{\mathbb E}\varphi(x_1,\dots,x_{n-1},X_n)=\sup_{A\in\Lambda}\mathbb E\varphi(x_1,\dots,x_{n-1},A\xi_n).$$
Using the result of \cite{RockWets98} (Theorem 14.60) on the interchange of maximization and expectation operations, it is not difficult to show that
\begin{align*}
&\widehat{\mathbb E}\varphi(X_1,\dots,X_n)=\sup_{A_0^{n-1}\in\mathfrak A_0^{n-1}}\mathsf E\varphi(A_0\xi_1,\dots,A_{n-1}\xi_n)\\
&=\sup_{A_0^{n-2}\in\mathfrak A_0^{n-2}}\mathsf E\overline\varphi(A_0\xi_1,\dots,A_{n-2}\xi_{n-1})=\widehat{\mathbb E}\overline\varphi(X_1,\dots,X_{n-1}).
\end{align*}
This means that $X_n$ is independent from $(X_1,\dots,X_{n-1})$ in the sense of sublinear expectations theory (see \cite{Peng08c}, Definition 3.10).

Denote by $m_n(\Lambda)$ the set of $\mathscr F_n$-measurable functions with values in $\Lambda$. The random variables $X_i$ have no mean uncertainty:
$$ \widehat{\mathbb E}(\pm X_i)=\sup_{A_{i-1}\in m_{i-1}(\Lambda)}\mathsf E (\pm A_{i-1}\xi_i)=0.$$
Furthermore, for $S\in\mathbb S_d$ we obtain
\begin{align*}
\widehat{\mathbb E} (SX_i\cdot X_i) &=\sup_{A_{i-1}\in m_{i-1}(\Lambda)}\mathsf E(S A_{i-1}\xi_i\cdot A_{i-1}\xi_i)=
\sup_{A_{i-1}\in m_{i-1}(\Lambda)}\mathsf E\, \Tr(A_{i-1}^T S A_{i-1})\\
&=\sup_{A\in\Lambda}\Tr(A^T S A).
\end{align*}
By Peng's central limit theorem (see \cite[Theorem 5.1]{Peng08}, \cite[Theorem 3.3]{Peng08c}) the sequence $n^{-1/2}\sum_{i=0}^n X_i$ converges in law to a $G$-normal random vector $Y$:
$$ \widehat{\mathbb E} f\left(\frac{1}{\sqrt n}\sum_{i=1}^n X_i\right)=
\sup_{A_0^{n-1}\in\mathfrak A_0^{n-1}}\mathsf E f\left(\sum_{j=0}^{n-1}\frac{A_j\xi_{j+1}}{\sqrt n}\right)\to \widehat{\mathbb E} f(Y),\quad f\in C_b(\mathbb R)$$
with
$$ G(S)=\frac{1}{2}\widehat{\mathbb E} (SX_i\cdot X_i)=\frac{1}{2}\sup_{A\in\Lambda}\Tr(A^T S A).$$

By the definition of the $G$-normal distribution, we have $\widehat{\mathbb E} f(Y)=u(0,0)$, where $u$ is the viscosity solution of the $G$-heat equation (\ref{eq:2.1}) with the boundary condition (\ref{eq:2.2}). Hence, for $\xi_i$ with finite moments of all orders, Theorem 1 follows from Peng's CLT.

As is mentioned in \cite{Peng08c} (Remark 3.8) the same result can be proved under the assumption $\widehat{\mathbb E}|X_i|^{2+\delta}<\infty$ for some $\delta>0$, instead of $\widehat{\mathbb E}|X_i|^n<\infty$ for all $n$ (it is not known whether one can take $\delta=0$). In our case it corresponds to the condition $\mathbb E|\xi_i|^{2+\delta}<\infty$, which is still superfluous. The reason for the appearance of this assumption we see in the lack of dominated convergence theorem in a general sublinear expectation space. We also refer to \cite{Rokh15}, where Peng's approach was applied to the one-dimensional problem with variance uncertainty. It was shown that, written in the classical terms, this approach allows to prove the CLT without unnatural assumptions, even in the case of non-identically distributed independent random variables.

Conversely, one can try to prove Peng's CLT by the methods of the present paper. Let $(X_i)_{i=1}^\infty$ be i.i.d. random variables under a sublinear expectation $\widehat{\mathbb E}$, and let $f$ be a bounded Lipshitz continuous function. The definition of independence, applied to bounded Lipshitz continuous functions
$$ v_n(t_j^n,x)=\widehat{\mathbb E} f\left(x+\frac{1}{n^{1/2}}\sum_{i=j}^n X_i\right),\quad t_j^n=j/n,$$
gives the recurrence relation
$$ v_n(t_j^n,x)=\widehat{\mathbb E}\left[\widehat{\mathbb E}f\left(x+\frac{x_j}{n^{1/2}}+\frac{1}{n^{1/2}}\sum_{i=j+1}^n X_i\right)\biggr|_{x_j=X_j}\right]=\widehat{\mathbb E}v_n(t_{j+1}^n,x+X_j/n^{1/2})$$
corresponding to the dynamic programming principle, considered in Lemma \ref{lem:1}. However, to follow subsequent reasoning, one needs additional assumptions, like monotone continuity (or Fatou) property: see \cite{CohJiPen11}. Indeed, we have used the dominated convergence theorem which is not true in a general sublinear expectation space.

 \bibliographystyle{plain}
 \bibliography{litCLT}





\end{document}